\documentclass[12pt]{amsart}
\usepackage{txfonts}
\usepackage{amscd,amssymb, amsmath,eucal}
\usepackage[colorlinks,plainpages,backref,urlcolor=blue]{hyperref}
\usepackage{verbatim}
\usepackage{xcolor}

\newtheorem{theorem}{Theorem}[section]
\newtheorem{corollary}[theorem]{Corollary}
\newtheorem{lemma}[theorem]{Lemma}
\newtheorem{prop}[theorem]{Proposition}

\theoremstyle{definition}

\newtheorem{example}[theorem]{Example}
\newtheorem{remark}[theorem]{Remark}

\newtheorem{question}[theorem]{Question}

\newcommand{\C}{\mathbb{C}}

\newcommand{\PP}{\mathbb{P}}

\DeclareMathAlphabet{\pazocal}{OMS}{zplm}{m}{n}
\newcommand{\A}{{\pazocal{A}}}

\newcommand{\CC}{{\mathcal{C}}}

\def\dot{\mathchar"013A}
\newcommand{\hdot}{{\raise1pt\hbox to0.35em{\Huge $\dot$}}}

\begin{document}
\date{\today}

\title[A generalized Saito criterion, from line arrangements to curves]%
{A generalized Saito criterion, from line arrangements to curves} 
\author{Anca~M\u acinic$^*$}
\thanks{$^*$ Partially supported by a grant of the Ministry of Research, Innovation and Digitization, CNCS - UEFISCDI, project number PN-IV-P1-PCE-2023-2001, within PNCDI IV}

\subjclass[2020]{14H50, 32S22 (Primary);   13N15,  13D02 (Secondary)}

\keywords{ plus-one generated curve; type 2 curve;  generalized Saito criterion}

\begin{abstract}

We formulate a generalized Saito criterion for complex projective plane curves, 
building on 
 a recent generalized Saito criterion for plus-one generated arrangements of hyperplanes due to Junyan Chu.
\end{abstract}

\maketitle

\begin{center}
{\it Dedicated to the memory of Mihai Tib\u{a}r}
\end{center}

\section{Introduction}

Since the introduction of the notion of plus-one generated (POG)  arrangements of hyperplanes by Abe in \cite{A:POG}, the study of POG arrangements of hyperplanes and of POG complex projective plane curves (the POG property was extended to curves in \cite{DS0}) is a very effervescent  area of research (see for instance \cite{ADen, ADP, AIM, Chu, Dim, DS1, M, MP1, MP2, MV}).
 The main  motivation comes from the connection with the freeness property, which is at the center of the emblematic Terao Conjecture.  The author proves in \cite{A:POG} that {\it next to} a free projective line arrangement $\A$ only free or POG arrangements can be found, the nearness to $\A$ being defined here by addition-deletion. More precisely, we have the following result.

\begin{theorem}(\cite{A:POG})
Let $\A$ be a free projective line arrangement. Then by adding or deleting a line one obtains either a free or a POG arrangement.
\end{theorem}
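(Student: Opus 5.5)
The plan is to treat addition and deletion separately, each time using one short exact sequence together with the plus-one generated characterization. Let $\A$ be free with exponents $(d_1,d_2)$, so $D_0(\A)=S\theta_1\oplus S\theta_2$ with $\deg\theta_i=d_i$ and $d_1+d_2=|\A|-1$. Recall that $\A'$ is POG precisely when $D_0(\A')$ has a minimal resolution
\[
0\to S(-d-1)\to S(-d_1')\oplus S(-d_2')\oplus S(-d)\to D_0(\A')\to 0,
\]
with $d_1'+d_2'=|\A'|$; the extra generator has degree $d$, the \emph{level}. In rank three, with $\theta_E$ removed, Saito's criterion (Theorem in the introduction's setting) shows freeness is equivalent to $D_0$ being generated by two elements whose degrees sum to $|\A'|-1$.

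\textbf{Deletion.} Let $\A'=\A\setminus\{H\}$ with $H=\{\alpha_H=0\}$. I would use the exact sequence
\[
0\to D_0(\A)\to D_0(\A')\xrightarrow{\rho} D_0(\A'|_H)
\]
from the Euler-restriction (Ziegler multiderivation) map. Here $D_0(\A'|_H)$ is a free rank-one module over $\bar S=S/\alpha_H$ generated in degree $|\A'|-|\A''|$, where $\A''=\A^H$. If $\rho$ is surjective, then $\A'$ is free by addition-deletion. If $\rho$ is not surjective, its image is a nonzero submodule of a rank-one free module over a polynomial ring in two variables that is graded-generated. The next step is to show the image is principal, generated by some $\bar\varphi$, with $\varphi\in D_0(\A')$. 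Then $D_0(\A')$ is generated by $\theta_1,\theta_2,\varphi$, and the only syzygy comes from $\alpha_H\varphi\in D_0(\A)$, i.e. $\alpha_H\varphi=f_1\theta_1+f_2\theta_2$. This yields exactly the POG resolution with level $d=\deg\varphi$, and the degree count $d_1+d_2=|\A'|$ holds.

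\textbf{Addition.} Let $\A'=\A\cup\{H\}$. I would dualize the same argument. Set $\theta\mapsto\theta(\alpha_H)\bmod\alpha_H$ and consider
\[
0\to D_0(\A')\to D_0(\A)\to \bar S
\]
with image $\bar S$-ideal $I=(\bar\theta_1(\alpha_H),\bar\theta_2(\alpha_H))$, where $\bar\theta_i(\alpha_H)$ denotes the residue of $\theta_i(\alpha_H)$. Then $D_0(\A')$ is the kernel of $S^2\to \bar S(\ast)$. Since $\bar S$ is a polynomial ring in two variables, $I$ is either principal up to a common factor, which gives freeness, or the kernel is computed by Koszul-type relations. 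Concretely, write $\bar g_i=\bar\theta_i(\alpha_H)$ and $h=\gcd(\bar g_1,\bar g_2)$. The kernel is generated by $\alpha_H\theta_1$, $\alpha_H\theta_2$ and $(\bar g_2/h)\theta_1-(\bar g_1/h)\theta_2$, suitably lifted. Eliminating redundancies gives either two generators, hence free, or three generators with one relation, hence POG.

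\textbf{Main obstacle.} The hardest step is showing the image in the deletion case is principal. I plan to use that the image is reflexive: it equals the saturation, since $D_0(\A')$ is reflexive of rank two and the cokernel has depth considerations. A reflexive ideal in a two-variable polynomial ring is principal. Failing that, I would instead use the local-cohomology/Ziegler multiplicity bound $|\A''|\le d_1+1$, which forces the minimal generator count to be at most three, and then read off the resolution from rank considerations.
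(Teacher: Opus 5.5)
Your addition half is essentially right, but the deletion half rests on a false implication.

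\textbf{The gap in the deletion case.} In your sequence $0\to D(\A)\to D(\A')\xrightarrow{\rho}\bar S(-b)$, with $b=|\A'|-|\A''|$, the free module is the \emph{kernel}. So surjectivity of $\rho$ only says that $D(\A')$ is an extension of $\bar S(-b)$ by a free module, i.e.\ it has a resolution $0\to S(-b-1)\to D(\A)\oplus S(-b)\to D(\A')\to 0$. That is the shape of a POG module just as much as of a free one. The implication ``surjective $\Rightarrow$ free'' holds for Terao's sequence $0\to D(\A')\xrightarrow{\cdot\alpha_H}D(\A)\to D(\A'')$, where the free module sits in the middle, but not for yours.

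In fact, for $\A$ free your $\rho$ is \emph{always} surjective. Along $H$ the arrangement $\A'$ is locally a pencil, so $\rho$ is onto at the sheaf level, and $H^1_*(\widetilde{D(\A)})=0$ since $\widetilde{D(\A)}$ splits. So your non-surjective branch is empty, and your case split would make every deletion free. That is false. Take $\A(3,3,3)$, defined by $(x^3-y^3)(y^3-z^3)(z^3-x^3)=0$: it is free with exponents $(1,4,4)$, and each line meets the other eight in $4$ points. Deleting one line gives $\chi(\A',t)=(t-1)(t^2-7t+13)$, which does not factor over $\mathbb{Z}$, so $\A'$ is not free (it is POG).

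Your POG branch has the complementary defect. From $\alpha_H\varphi=f_1\theta_1+f_2\theta_2$ you conclude POG without checking that no $f_i$ is a nonzero constant. If one is, the resolution is not minimal and $\A'$ is free.

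\textbf{The repair.} It uses only what you already have, applied to both cases at once. The step you flagged as the main obstacle is fine: $H^1_{\mathfrak m}(\operatorname{Im}\rho)$ sits between $H^1_{\mathfrak m}(D(\A'))=0$ and $H^2_{\mathfrak m}(D(\A))=0$. Hence $\operatorname{Im}\rho$ is a rank-one $\bar S$-module of depth $2$, so it is free. Lift its generator to $\varphi$; then $D(\A')=D(\A)+S\varphi$. Any relation $c\varphi+\cdots=0$ forces $\rho(c\varphi)=0$, hence $\alpha_H\mid c$. So all syzygies are multiples of $\alpha_H\varphi=f_E\theta_E+f_1\theta_1+f_2\theta_2$. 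Thus $\A'$ is free exactly when this syzygy has a nonzero constant coefficient, and otherwise it is POG with $\exp\A'=\exp\A$ and level $b$.

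\textbf{The addition case.} Your argument is correct and is the mirror image of this. The generators $\alpha_H\theta_1$, $\alpha_H\theta_2$, $\varphi=g_2'\theta_1-g_1'\theta_2$ satisfy the syzygy $\alpha_H\varphi=g_2'(\alpha_H\theta_1)-g_1'(\alpha_H\theta_2)$. This syzygy generates all syzygies because $\bar g_1',\bar g_2'$ are coprime, hence not both divisible by $\alpha_H$. You should state explicitly that, when neither $g_i'$ is a unit, it is the \emph{linear} coefficient $\alpha_H$ that makes $\A'$ POG. A single syzygy among three generators of $D_0$ also occurs for type $2A$, so the linear coefficient is what distinguishes POG.

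Finally, a notational point: $D_0(\A)\not\subset D_0(\A')$ literally. Work with $D(\cdot)$, where $\theta_E\mapsto 0$, and split off $S\theta_E$ at the end.
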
 
 
The same phenomenon happens for (reduced complex plane projective) curves: near a free curve (i.e. by addition or deletion of an irreducible component which is a line) there can be only free or POG curves, see \cite{Dim, MP2}. Recently the notion of {\it type} of a curve was introduced in \cite{ADP}, see ~\S\ref{sec:prelim}  for the definition. With this, a free curve is precisely a type $0$ curve and a POG curve is precisely a type $1$ curve and the above mentioned results on curves can be rephrased as: {\it given a free curve, by the addition or deletion of a line, seen as an irreducible component, one necessarily obtains a curve of type at most $1$}. 

This type of behaviour extends to addition-deletion of smooth conics, as shown in a series of recent results from \cite{M}
that characterize the addition / deletion of a smooth conic to a free curve. In particular, the following holds.
 \begin{theorem}(\cite{M})
 Given a free curve, by the addition or deletion of a smooth conic, as an irreducible component, one necessarily obtains a curve of type at most $2$.
 \end{theorem}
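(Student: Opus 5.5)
The plan is to work with the graded $S$-module $D_0(f)=\mathrm{AR}(f)$ of syzygies $\rho=(a,b,c)$ with $af_x+bf_y+cf_z=0$, where $S=\C[x,y,z]$. I take the type of a curve $C:f=0$ to be the number of relations in a minimal resolution of $\mathrm{AR}(f)$, i.e. $m-2$ if $m$ is its number of minimal generators. Since $\mathrm{AR}(f)$ is reflexive of rank $2$, hence of projective dimension at most $1$, it has a resolution
\[
0\to \bigoplus_{j=1}^{m-2}S(-e_j)\to \bigoplus_{i=1}^{m}S(-d_i)\to \mathrm{AR}(f)\to 0 .
\]
Let $C$ be free with exponents $(d_1,d_2)$, $d_1\le d_2$, $d_1+d_2=\deg C-1$, and let $Q$ be a smooth conic. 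The goal is to show that $\mathrm{AR}(fq)$ (addition, $C'=C\cup Q$) or $\mathrm{AR}(f/q)$ (deletion, $C'=C\setminus Q$) needs at most $4$ generators.

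\textbf{Addition.} First compare $\mathrm{AR}(fq)$ with $\mathrm{AR}(f)$ through the standard exact sequence
\[
0\to \mathrm{AR}(fq)\to \mathrm{AR}(f)\xrightarrow{\ \phi\ } \big(S/(q)\big)(\ast),
\]
where $\phi(\rho)=\rho(q)\bmod q$. This uses the Euler-derivation formulation: a syzygy $\rho$ of $f$ lifts to $fq$ exactly when $\rho(q)\in(q)$. The key point is that $Q\cong\PP^1$, so the coordinate ring $R=S/(q)$ is identified with the Veronese subring $\C[s,t]^{(2)}$. The image of $\phi$ is then an $R$-submodule generated by two elements, $\phi(\rho_1)$ and $\phi(\rho_2)$. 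Pushed to $\PP^1$, these are two binary forms $g_1,g_2$ of degrees $2(d_1+1)$ and $2(d_2+1)$, and they vanish at the points of $C\cap Q$ with the prescribed orders.

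Therefore $\mathrm{AR}(fq)=\ker\phi$, and this kernel is generated by:
\begin{itemize}
\item the products $q\rho_1$ and $q\rho_2$;
\item lifts of generators of the syzygy module of $(g_1,g_2)$ over $R$.
\end{itemize}
Over $\C[s,t]$ the syzygy module of two forms is free of rank $1$. Over the Veronese subring $R$, however, it becomes a module with at most $2$ generators, coming from the even and odd parity pieces. This gives at most $2+2=4$ generators, hence type at most $2$.

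\textbf{Deletion.} Write $f=gq$. I will use the dual sequence
\[
0\to \mathrm{AR}(f)\to \mathrm{AR}(g)\to (S/(q))(\ast),
\]
and reverse the roles: $\mathrm{AR}(g)$ is generated by $\rho_1$, $\rho_2$, together with preimages of generators of a cokernel. I will bound that cokernel by the same binary-form computation on $Q\cong\PP^1$. Since $\mathrm{AR}(f)$ is free, the resulting presentation matrix has rank $2$. This should yield at most $4$ generators, either directly or via the results on addition/deletion of conics already stated from \cite{M}. I will also double-check the count against the Dimca–Sticlaru inequality relating $\sum d_i$ to the total Tjurina number.

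\textbf{Main obstacle.} The delicate step is the generator count for the syzygy module over the non-polynomial ring $R=\C[s,t]^{(2)}$. One must show that the module of pairs $(u,v)\in R^2$ with $ug_1+vg_2=0$ is generated by at most $2$ elements. My first attempt is to write $h=\gcd(g_1,g_2)$ in $\C[s,t]$, with syzygy generator $(g_2/h,\,-g_1/h)$. Its $R$-multiples are then the even-degree multiples, and these are generated over $R$ by $s$ and $t$ times the generator when $\deg(g_i/h)$ is odd. The second thing to check is that the lifts to $S$ create no further minimal generators, which holds because $q$ is already among the coefficients.
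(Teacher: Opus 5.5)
\textbf{The gap: the proposal proves a bound on syzygies, not on the type.} You identify the type with the number of syzygies $m-2$, but in this paper $t(\CC)=d_2+d_3-\deg\CC+1$, where $d_2\le d_3$ are the two smallest degrees of minimal generators of $D_0(\CC)$. By \cite{BM} and \cite{HS} (see \S\ref{ss:bourbaki}) this equals $\sum_i(e_i-d_{i+2})$. Every summand is $\ge 1$ but can be larger, so the number of syzygies is only a \emph{lower} bound for the type. Two examples show the gap:
\begin{enumerate}
\item A smooth curve of degree $d\ge 4$ has $D_0$ generated by the three Koszul derivations with a single relation, yet $t=d-1\ge 3$.
\item The type $2A$ curves have one syzygy with $e_2=d_4+2$, e.g.\ the degree $12$ example with exponents $(1,6,7,7)$.
\end{enumerate}
Your addition computation does correctly show that $\mathrm{AR}(fq)$ needs at most $4$ generators. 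But that only gives ``at most two syzygies'', which is strictly weaker than the statement, because nowhere do you bound $d_2'+d_3'$ for the new curve.

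The deletion half is not yet an argument. The cokernel $D(f/q)/D(f)$ embeds in a shift of $S/(q)$ as a graded ideal of $\C[s,t]^{(2)}$, and such ideals can need arbitrarily many generators. Falling back on \cite{M} is circular. A smaller point: literally $\mathrm{AR}(fq)\not\subset\mathrm{AR}(f)$ and $q\rho_i\notin\mathrm{AR}(fq)$. You have to work in $D(\cdot)$ and subtract multiples of $\theta_E$ as in \eqref{eq:theta_0}.

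\textbf{The missing idea: bound degrees of an independent triple.} The type only sees the two smallest degrees, so what you need is a degree bound on some independent elements, not a generator count. If $\psi_1,\psi_2,\psi_3\in D(\CC')$ are homogeneous with $\det M(\psi_1,\psi_2,\psi_3)\neq 0$, then $\sum_i\deg\psi_i\ge 1+d_2'+d_3'$. To see this, write $\psi_i=a_i\theta_E+\psi_i^0$. By Corollary \ref{cor:det}, a nonzero determinant forces some $a_i\neq 0$ and the remaining pair $\psi_j^0,\psi_k^0$ to be independent. This is the inequality used in the proof of Theorem \ref{thm:Chu_thm_consequence}, and it underlies Proposition \ref{thm:type_leq2_char}. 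The paper only quotes the statement from \cite{M}, but with this inequality it takes two lines.

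Let $\theta_E,\theta_1,\theta_2$ be a basis of $D(\CC)$ with $\CC:f=0$. Then $\det M(\theta_E,\theta_1,\theta_2)=cf$ with $c\in\C^*$, and $1+d_1+d_2=\deg\CC$.
\begin{enumerate}
\item Addition: $q\theta_i\in D(fq)$, since $(q\theta_i)(fq)=q^2\theta_i(f)+fq\,\theta_i(q)$. Then $\det M(\theta_E,q\theta_1,q\theta_2)=cq\cdot(fq)$, so $g_{123}=cq\in S_2$ and $t(\CC\cup Q)\le 2$.
\item Deletion: $D(f)\subset D(f/q)$, and $\det M(\theta_E,\theta_1,\theta_2)=cq\cdot(f/q)$. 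So again $t(\CC\setminus Q)\le 2$.
\end{enumerate}
In your language: the Euler-corrected $q\rho_1,q\rho_2$ are independent in $\mathrm{AR}(fq)$ with degree sum $\deg(\CC\cup Q)+1$. That single observation, not the Veronese syzygy count, is what the statement needs.
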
 

In  \cite{Chu} the author presents a generalization of Saito's criterion to strictly plus-one generated (SPOG) arrangements  (a version of this result is recalled in Theorem \ref{thm:Chu-thm_1.4}) and states a series of related conjectures / open questions.  %Among them, whether the...
We show that  Theorem \ref{thm:Chu-thm_1.4} also holds for curves, we state a similar result in Theorem \ref{thm:Chu_thm_consequence}, and we address some of these open questions, see Remark \ref{rem:counter} and Proposition \ref{thm:type_leq2_char}.
% Proposition \ref{thm:type_leq2_char},  gives a necessary and sufficient, albeit obvious, criterion for a curve to be a type at most $k$ curve.

\section*{Acknowledgments}
After sharing this note with Junyan Chu it was brought to our attention that in a submitted unpublished version of the paper \cite{Chu}
 \cite[Conjecture 4.2]{Chu} was removed, since the author herself found counterexamples to it,
and that this version contains as well as a construction of the ideal J(G), for arrangements of hyperplanes.
Also that there is work in progress by J. Chu and S. Kaji that could lead to a refinement of our Proposition \ref{thm:type_leq2_char}, by distinguishing types 2A and 2B in the case of type $2$ curves.
We thank Junyan Chu for looking over this note and for sharing her ongoing work and ideas, as well as for helpful remarks and suggestions that led to an improved exposition.

\section{Preliminaries}
\label{sec:prelim}
Throughout the paper we will only work with reduced curves in  the complex projective plane, which  we will call  simply curves. Relevant classes of examples, for the purpose of this note, are projective line arrangements and arrangements of conics and lines in $\PP^2\C$. Denote $S := \C[x,y,z]$ and $Der(S):=\{ \theta : S \rightarrow S \; |\; \theta(fg) = f\theta(g) + g\theta(f), \; \forall f,g \in S \}$, the module of derivations of $S$. Obviously, $Der(S)  = S \partial _x \oplus S \partial_y \oplus S \partial_z.$

Given a curve $\CC: f_{\CC}=0$, denote by $D(\CC)$ its associated module of  derivations, defined by 
$$
D(\CC):= \{\theta \in Der(S) \; | \;  \theta(f_{\CC}) \in f_{\CC}\cdot S\}.
$$

It is easy to see that  $\theta_E:= x \partial x + y \partial y +z \partial z \in D(\CC)$, for any curve $\CC$. Moreover, $D(\CC)$ admits a decomposition as  a direct sum
$D(\CC) = D_0(\CC) \oplus S \theta_E,$
where $$D_0(\CC) := \{ \theta \in  Der(S) \; | \; \theta(f_{\CC})=0 \}.$$
To make explicit the direct sum decomposition of $D(\CC)$, given $\theta \in D(\CC)$ with $ \theta(f_{\CC}) = g  f_{\CC}, \; g \in S$,
\begin{equation}
\label{eq:theta_0}
\theta^0 = \theta- \deg( f_{\CC})^{-1} g  \theta_E \in D_0(\CC).
\end{equation}

\begin{comment}
We recall that is well known that $D_0(\CC)$ is isomorphic to the graded S-module of Jacobian syzygyes of $f_\CC$.
 $AR(f_{f_{\CC}}):= \{(a,b,c) \in S^3 \; |\; a  \partial x(f_{\CC}) + b \partial y(f_{\CC}) + c \partial x(f_{\CC}) =0\}.$
 This isomorphism $D_0(\CC) \rightarrow AR(f_{f_{\CC}})$ maps $\theta = a \partial x + b \partial y +c \partial z, \; a,b,c \in S$ to $(a,b,c) \in S^3$.
\end{comment}
 
 A curve $\CC$ is called {\it free} if $D(\CC)$ (or, equivalently, $D_0(\CC)$)  is a free S-module. $\CC$ is  called {\it plus-one generated}  
  if $D(\CC)$ admits a minimal system of generators $\{\theta_1, \theta_2, \theta_3, \theta_4\}$  satisfying a unique relation 
  \begin{equation}
\label{eq:POG_rel}
  f_1 \theta_1 + f_2 \theta_2 + f_3 \theta_3  + f_4 \theta_4=0,
   \end{equation}
  where $f_i \in S, i = \overline{1,4}$ with $\deg(f_4) =1$. We recall that all POG curves are SPOG (this is defined as POG plus the condition $f_4 \neq 0$), since all the coefficients $f_{\geq 2}$ of the unique POG relation are non-zero, see \cite[Corollary 2.2]{DS0}.
\smallskip

We introduce now the general set-up that enables us to define the type of a curve.
Let $G = \{\theta_1, \theta_2, \dots, \theta_r\}$ be a minimal system of generators for $D(\CC)$ of ordered degrees $1=d_1 \leq d_2 \leq  \dots \leq  d_r$, to which we will refer as the {\it generalized exponents} of $\CC$. 
Denote $d:= \deg(f_{\CC})$. The {\it type} of the curve $\CC$ is defined in \cite{ADP} 
as the positive integer $$t(\CC):=d_2+d_3-d+1.$$

We recall a result from \cite{DS0} that justifies the above definition.
 
\begin{theorem}
\label{thm: gen_exp}
\begin{enumerate}
\item $\CC$ is free if and only if  $t(\CC) =0$.  %$d_1 + d_2 = d-1$
\item $\CC$ is plus-one generated if and only if $t(\CC)=1$.  %$d_1 + d_2 = d$
\end{enumerate}
\end{theorem}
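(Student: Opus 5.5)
To prove Theorem~\ref{thm: gen_exp} I would pass to $D_0(\CC)$. Writing $D(\CC)=D_0(\CC)\oplus S\theta_E$, a minimal generating system $\theta_1=\theta_E,\theta_2,\dots,\theta_r$ of $D(\CC)$ corresponds, via the projection \eqref{eq:theta_0}, to a minimal generating system $\theta_2^0,\dots,\theta_r^0$ of $D_0(\CC)$ with the same degrees $d_2\le\dots\le d_r$ (here $\deg\theta$ means the polynomial degree of its coefficients, so $\deg\theta_E=1$). Minimal relations among the generators of $D(\CC)$ likewise correspond to relations among the generators of $D_0(\CC)$. So the statement reduces to the corresponding statement about $D_0(\CC)$, i.e. about the module of Jacobian syzygies $AR(f)$ of $f=f_{\CC}$, with $d_2=mdr(f)$. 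The standard tool is the Koszul-type relation: for $\theta,\theta'\in D_0(\CC)$, the cross product of their coefficient vectors is proportional to $\nabla f=(f_x,f_y,f_z)$, because both vectors are orthogonal to $\nabla f$.

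\textbf{Key inequality.} Let $\theta_2^0,\theta_3^0$ be the two lowest-degree generators; they are not proportional by minimality. Their cross product is a nonzero vector of degree $d_2+d_3$, divisible by $\nabla f$ up to a factor $h$ of degree $d_2+d_3-(d-1)$. Since $\CC$ is reduced, the partials $f_x,f_y,f_z$ have no common factor, so $h$ is a polynomial. This gives $d_2+d_3\ge d-1$, i.e. $t(\CC)\ge 0$ always.

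\textbf{Part (1).} This is Saito's criterion in the plane. If $t(\CC)=0$, then $h$ is a nonzero constant, and the determinant of $\theta_E,\theta_2,\theta_3$ equals $c\,f$ with $c\ne0$, by the Euler formula. Saito's criterion then shows that these three derivations form a basis, so $\CC$ is free. Conversely, if $D(\CC)$ is free, it has rank $3$ with basis $\theta_E,\theta_2,\theta_3$, and Saito gives $1+d_2+d_3=d$.

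\textbf{Part (2).} This is the main obstacle. Suppose $t(\CC)=1$, so $h=\ell$ is a nonzero linear form. For any further generator $\theta_j^0$, the vectors $\theta_2^0\times\theta_j^0$ and $\theta_3^0\times\theta_j^0$ are again multiples of $\nabla f$. Taking dot products, and using the vector identity for $(\theta_2^0\times\theta_3^0)\times\theta_j^0$, I would show that $\ell\,\theta_j^0$ lies in $S\theta_2^0+S\theta_3^0$. This comes from expanding $\ell\,\nabla f\times\theta_j^0$ in terms of $\theta_2^0,\theta_3^0$, with coefficients $\nabla f\cdot$-type expressions that vanish, combined with a Cramer-rule argument. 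From this I would aim to deduce that only one extra generator is needed, with a single relation whose coefficient on it is $\ell$, of degree $1$. The step I expect to be hardest is bounding the number of generators to exactly $4$.

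For that step I would use the Hilbert–Burch structure of the rank-$2$ module $D_0(\CC)$, which has a resolution $0\to F_1\to F_0\to D_0\to 0$ with $\mathrm{rank}\,F_0=\mathrm{rank}\,F_1+2$. I would compare degrees through the Euler characteristic and the total Tjurina number, as in \cite{DS0}, to force $\mathrm{rank}\,F_1=1$ with the relation having degree $d_2+d_3-d+2$ in the appropriate slot. Conversely, for a POG curve the relation \eqref{eq:POG_rel} with $\deg f_4=1$ forces $d_4=d_2+d_3-d+1+\dots$. Cancelling via the cross-product identity then gives $d_2+d_3=d$, so $t(\CC)=1$.
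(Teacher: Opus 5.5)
There is a genuine gap in Part~(2). Your Part~(1), the inequality $t(\CC)\ge 0$, and the Cramer-rule step giving $\ell\, D_0(\CC)\subseteq N:=S\theta_2^0+S\theta_3^0$ (Proposition~\ref{prop:lemmas_extended_to_curves}(2) in this setting) are correct. The gap is the step you flag yourself, and it is the whole content of Part~(2). Knowing that $\ell$ annihilates $D_0(\CC)/N$ says nothing about how many generators this $S/(\ell)$-module needs, and your proposed tool cannot supply that bound.

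Comparing first Chern classes (your ``Euler characteristic'') in a minimal resolution \eqref{eq:res_arb_curve} gives exactly the identity from \cite{BM} quoted in \S\ref{ss:bourbaki}: $t(\CC)=\sum_{i=2}^{r-2}(e_i-d_{i+2})$. This forces a single relation only if you know that \emph{each} summand is at least $1$, and nothing in your plan gives that. The total Tjurina number does not provide it either. Your converse is also not an argument as written (``$d_4=d_2+d_3-d+1+\dots$'', ``cancelling via the cross-product identity''). The relation degree you predict is off by one: the coefficient of $\theta_4$ has degree $e_2-d_4=t(\CC)$, not $d_2+d_3-d+2$.

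The missing ingredient is the inequality $e_j\ge d_{j+2}+1$, the input from \cite{HS} used in \S\ref{ss:bourbaki}; the paper itself only quotes the theorem from \cite{DS0}. You can prove it with what you already have. Suppose $e_j\le d_{j+2}$. By minimality, the relations indexed $2,\dots,j$ have zero coefficient on every $\theta_k$ with $k\ge j+2$, since such a coefficient would be a constant or have negative degree. So they are $j-1$ independent relations among $\theta_E,\theta_2,\dots,\theta_{j+1}$ alone. These $j+1$ derivations would then span a submodule of rank at most $2$. That contradicts the independence of $\theta_E,\theta_2,\theta_3$ given by your nonzero cross product (compare Lemma~\ref{lem:non_rel_thetaE23}).

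With this inequality, $t=1$ forces $r=4$ and $e_2=d_4+1$, and the $\theta_4$-coefficient is nonzero by the same independence. Conversely, POG gives $e_2-d_4=1$, hence $t=1$.

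Alternatively, you can finish along your own line, as in \cite{DS0}. The map $\theta\mapsto h_\theta$, where $\theta_2^0\times\theta=h_\theta\nabla f$, identifies $D_0(\CC)/S\theta_2^0$ with a shift of the Bourbaki ideal $B=B(\CC,\theta_2^0)$. This ideal is saturated because $D_0(\CC)$ has depth $\ge 2$. Since $\ell\in B$, the ideal $B/(\ell)$ is saturated in $S/(\ell)\cong\C[u,v]$, hence principal, so $B=(\ell,g)$. Then $D_0(\CC)/N\cong B/(\ell)$ is cyclic. This yields exactly one extra generator and one relation with coefficient $\ell$.
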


\subsection{Saito type criteria}
For $\{\theta_1, \theta_2, \theta_3\} \subset D(\CC)$ homogeneous derivations, define the $3 \times 3$  matrix $M(\theta_1, \theta_2, \theta_3)$ as the matrix whose line $i$ is consists of the coefficients in $S$ of the derivation $\theta_i$. The below Proposition \ref{prop:Delta|f} is most likely known, but we give here a proof for completion. The proof is  based on the following result from  \cite{dPW}.

\begin{lemma}(\cite[Lemma 3.1]{dPW})
\label{lem:3.1}
Let $\{ \theta^0_1, \theta^0_2\}  \subset D_0(\CC)$ homogeneous derivations, $$\theta^0_1 = a_1 \partial x + b_1 \partial y +c_1 \partial z, \; \theta^0_2 = a_2 \partial x + b_2 \partial y +c_2 \partial z.$$ Then there exists $h \in S$ such that  $$(b_1 c_2 - b_2 c_1, c_1 a_2 - c_2 a_1,  a_1 b_2 -a_2 b_1) = h(\frac{\partial f_{\CC}}{\partial x}, \frac{\partial f_{\CC}}{\partial y}, \frac{\partial f_{\CC}}{\partial z}).$$
\end{lemma}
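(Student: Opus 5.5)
We plan to argue by linear algebra over the fraction field $K=\C(x,y,z)$, followed by a divisibility argument in $S$. Put $v_i=(a_i,b_i,c_i)\in S^3$ for $i=1,2$, $\nabla f=(\partial f_{\CC}/\partial x,\partial f_{\CC}/\partial y,\partial f_{\CC}/\partial z)$, and $w=v_1\times v_2=(b_1c_2-b_2c_1,\;c_1a_2-c_2a_1,\;a_1b_2-a_2b_1)$. The hypothesis $\theta^0_i\in D_0(\CC)$ says exactly that $v_i\cdot\nabla f=0$ for $i=1,2$. Note also that $\nabla f\neq 0$, since $\deg f_{\CC}\geq 1$.

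\emph{Step 1.} If $v_1,v_2$ are linearly dependent over $K$, then $w=0$ and we take $h=0$. Otherwise $U=\mathrm{span}_K(v_1,v_2)$ is $2$-dimensional, so its orthogonal complement $U^\perp$ with respect to the standard bilinear form on $K^3$ is $1$-dimensional. Both $w$ and $\nabla f$ lie in $U^\perp$: the first by the usual identity $w\cdot v_i=\det(v_i,v_1,v_2)=0$, the second by hypothesis. Moreover $w\neq 0$ because $v_1,v_2$ are independent. Hence $w=(p/q)\nabla f$ with $p,q\in S$ coprime and $q\neq 0$, that is, $q\,w=p\,\nabla f$.

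\emph{Step 2.} From $q\,w=p\,\nabla f$ and $\gcd(p,q)=1$, the polynomial $q$ divides each of the three partial derivatives of $f_{\CC}$. It therefore suffices to show that these partials have no common nonconstant factor. Then $q$ is a unit, and $h=p/q\in S$ works. Homogeneity of $h$ follows automatically by comparing degrees in $w=h\nabla f$.

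\emph{Step 3 (main point).} Here we use that $\CC$ is reduced. Suppose an irreducible $\pi\in S$ divides all the partials of $f=f_{\CC}$. Euler's formula $x f_x+yf_y+zf_z=d f$ gives $\pi\mid f$, so we can write $f=\pi g$. Since $f$ is reduced, $\pi\nmid g$. From $f_x=\pi_x g+\pi g_x$ we get $\pi\mid \pi_x g$, hence $\pi\mid\pi_x$. Since $\deg\pi_x<\deg\pi$, this forces $\pi_x=0$, and likewise $\pi_y=\pi_z=0$. So $\pi$ is constant, which is a contradiction.

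We expect Step 3 to be the only place where a hypothesis on $\CC$ (reducedness) enters, and hence the one step that needs care. Steps 1 and 2 are formal.
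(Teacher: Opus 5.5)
Your proof is correct. The paper gives no argument of its own for this lemma; it simply imports it from du Plessis--Wall \cite{dPW}. What you wrote is the standard proof behind that citation. Over $\C(x,y,z)$, both $v_1\times v_2$ and $\nabla f_{\CC}$ lie in the one-dimensional orthogonal complement of $\mathrm{span}(v_1,v_2)$. The proportionality factor is then a polynomial because the partials of a reduced $f_{\CC}$ have no common factor.

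Your Step 3 proves that last fact algebraically, using Euler's formula and the observation that $\pi\mid\pi_x$ forces $\pi_x=0$. The usual geometric phrasing is that a common factor would put a whole curve inside $V(f_x,f_y,f_z)=\mathrm{Sing}(\CC)$, which is finite for reduced $\CC$. Your version has the merit of making explicit that reducedness and characteristic zero are the only inputs. It also shows that homogeneity of $\theta^0_1,\theta^0_2$ is needed only to conclude that $h$ is homogeneous, of degree $\deg\theta^0_1+\deg\theta^0_2-d+1$.
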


\begin{corollary}
\label{cor:det}
\begin{enumerate}
\item For $\{\theta^0_1, \theta^0_2, \theta^0_3 \} \subset D_0(\CC)$ homogeneous derivations,  $$\det M(\theta^0_1, \theta^0_2, \theta^0_3)=0.$$
\item For $\{ \theta^0_1, \theta^0_2\}  \subset D_0(\CC)$ homogeneous derivations, there exists $g \in S$ such that 
$$\det M(\theta_E, \theta^0_1, \theta^0_2) = g f_{\CC}.$$
\end{enumerate}
\end{corollary}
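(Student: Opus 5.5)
Both parts follow from Lemma \ref{lem:3.1} by expanding the determinants along a row. Throughout I write $\nabla f := (\partial_x f_{\CC}, \partial_y f_{\CC}, \partial_z f_{\CC})$ and use the Euler identity $x\,\partial_x f_{\CC} + y\,\partial_y f_{\CC} + z\,\partial_z f_{\CC} = d\, f_{\CC}$, where $d=\deg f_{\CC}$.

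For (1), I expand $\det M(\theta^0_1,\theta^0_2,\theta^0_3)$ along the row of $\theta^0_1$. Its cofactors are the components of the cross product of the coefficient vectors of $\theta^0_2$ and $\theta^0_3$, with the standard signs. By Lemma \ref{lem:3.1}, applied to the pair $\theta^0_2,\theta^0_3$, this cross product equals $h\,\nabla f$ for some $h\in S$. Hence
\[
\det M(\theta^0_1,\theta^0_2,\theta^0_3) = h\,(a_1\partial_x f_{\CC} + b_1\partial_y f_{\CC} + c_1 \partial_z f_{\CC}) = h\,\theta^0_1(f_{\CC}) = 0,
\]
since $\theta^0_1\in D_0(\CC)$.

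For (2), I expand $\det M(\theta_E,\theta^0_1,\theta^0_2)$ along its first row $(x,y,z)$. The same lemma, applied to $\theta^0_1,\theta^0_2$, gives
\[
\det M(\theta_E,\theta^0_1,\theta^0_2) = h\,(x\,\partial_x f_{\CC} + y\,\partial_y f_{\CC} + z\,\partial_z f_{\CC}) = h\, d\, f_{\CC}
\]
by the Euler identity. Thus $g = d\,h$ works.

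The only point needing care is that the cofactor expansion reproduces exactly the vector $(b_1c_2-b_2c_1,\ c_1a_2-c_2a_1,\ a_1b_2-a_2b_1)$ of Lemma \ref{lem:3.1}, with the correct signs and the correct order of the two derivations. This is a direct check: the cofactors of the first row of a $3\times 3$ matrix are the cross product of the second and third rows. If the order of the rows were swapped, the result would only change by a global sign, which can be absorbed into $h$. Homogeneity of the derivations plays no role beyond making $h$, and hence $g$, homogeneous.
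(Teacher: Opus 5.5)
Your proof is correct and is exactly the argument the paper compresses into ``immediate from Lemma \ref{lem:3.1}''. Expanding along the appropriate row turns the determinant into $h\,\theta^0_1(f_{\CC})=0$ in part (1), and into $h\,\theta_E(f_{\CC})=h\,d\,f_{\CC}$ by the Euler identity in part (2).
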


\begin{proof}
Immediate from Lemma \ref{lem:3.1}.
\end{proof}

\begin{prop}
\label{prop:Delta|f}
For any  set of homogeneous derivations $\{ \theta_1, \theta_2, \theta_3\} \subset D(\CC)$, there exists $g \in S$ such that  
$$
\det M(\theta_1, \theta_2, \theta_3) = gf_{\CC}.
$$
\end{prop}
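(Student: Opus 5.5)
The plan is to reduce to Corollary \ref{cor:det} by means of the decomposition $D(\CC) = D_0(\CC) \oplus S\theta_E$ and the multilinearity of the determinant in its rows. For each $i$, write $\theta_i(f_{\CC}) = g_i f_{\CC}$ with $g_i \in S$; since $\theta_i$ and $f_{\CC}$ are homogeneous, $g_i$ is homogeneous. Following \eqref{eq:theta_0}, set
$$\theta_i = \theta_i^0 + d^{-1} g_i \theta_E, \qquad \theta_i^0 \in D_0(\CC),$$
where $d = \deg(f_{\CC})$. Each $\theta_i^0$ is again homogeneous of the same degree as $\theta_i$, because $g_i\theta_E$ has that degree. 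This matters, since Corollary \ref{cor:det} is stated for homogeneous derivations.

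Next I expand $\det M(\theta_1,\theta_2,\theta_3)$ by multilinearity in the rows. Every row is a sum of a $D_0$-part and a multiple of $\theta_E$, so the expansion has $8$ terms. Any term containing two or more rows proportional to $\theta_E$ vanishes, because two rows are then proportional. What remains is:
\begin{enumerate}
\item the term $\det M(\theta_1^0,\theta_2^0,\theta_3^0)$, which is $0$ by Corollary \ref{cor:det}(1);
\item three terms with exactly one row equal to $d^{-1}g_i\theta_E$. After permuting rows, each equals $\pm d^{-1} g_i \det M(\theta_E,\theta_j^0,\theta_k^0)$, which lies in $f_{\CC}S$ by Corollary \ref{cor:det}(2).
\end{enumerate}
Summing these contributions gives $\det M(\theta_1,\theta_2,\theta_3) = g f_{\CC}$ for some $g\in S$.

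There is no serious obstacle. The only points needing care are that the $\theta_i^0$ remain homogeneous, so that Corollary \ref{cor:det} applies, and that the row permutations only introduce signs. An alternative check is Euler's formula $\theta_E(f_{\CC}) = d f_{\CC}$, which confirms that $\theta_E$ lies in $D(\CC)$, but the expansion above is the argument I would actually use.
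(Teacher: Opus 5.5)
Your proof is correct and is essentially the paper's argument: write each $\theta_i = \theta_i^0 + d^{-1}g_i\theta_E$ via \eqref{eq:theta_0}, expand the determinant multilinearly in the rows, and conclude with both parts of Corollary~\ref{cor:det}. Your bookkeeping is the accurate one: the terms with two or more $\theta_E$ rows vanish, and the surviving cross terms are $\pm d^{-1}g_k \det M(\theta_E,\theta^0_i,\theta^0_j)$ with $\{i,j,k\}=\{1,2,3\}$, whereas the paper's display writes the coefficient as $d^{-2}g_ig_j$. This does not change the conclusion, since every term is still a multiple of $f_{\CC}$.
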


\begin{proof}
Since $\theta_i \in D(\CC)$, there exists $g_i \in S$ such that $\theta_i(f_{\CC}) = g_i f_{\CC}, \; i=\overline{1,3}$.
Write, as in \eqref{eq:theta_0}, $\theta_i = \theta^0_i +d^{-1} g_i \theta_E$. Then $M(\theta_1, \theta_2, \theta_3)  = M(\theta^0_1 + d^{-1} g _1 \theta_E, \theta^0_2 + d^{-1} g _2 \theta_E, \theta^0_3 + d^{-1} g _3 \theta_E),$ so, by basic properties of determinants,
$$\det M(\theta_1, \theta_2, \theta_3) = \det M(\theta^0_1, \theta^0_2, \theta^0_3) + \sum_{i<j} \pm  d^{-2}g_ig_j\det M(\theta_E, \theta^0_i, \theta^0_j)$$ 
 The claim now follows from Corollary \ref{cor:det}.
\end{proof}

For $\{\theta_i, \theta_j, \theta_k \} \subset D(\CC)$ homogeneous derivations, define $g_{ijk} \in S$ by the condition 
\begin{equation}
\label{eq:prop_div}
\det  M(\theta_i, \theta_j, \theta_k) = g_{ijk} f_{\CC}.
\end{equation} 
The existence of  $g_{ijk}$ is ensured by Proposition \ref{prop:Delta|f}.

The next result was originally stated in  \cite[Lemma 3.1 \& Prop. 3.2]{Chu} for arrangements of hyperplanes. This result holds for curves, with  the same proof as in \cite{Chu}, keeping in mind the divisibility property for curves stated in Proposition \ref{prop:Delta|f}.
 
 \begin{prop}
 \label{prop:lemmas_extended_to_curves}
 Let  $\{\theta_1, \cdots ,\theta_4 \} \subset D(\CC)$ be homogeneous derivations and denote $g_i:=(-1)^ig_{1 \cdots \hat{i} \cdots 4}.$
 Then 
 \begin{enumerate}
 \item $\sum_{i=1}^4 g_i \theta_i =0$.
 \item If $g_i \neq 0$, then, for any $\theta \in D(\CC)$, $g_i \theta \in \langle \theta_1, \dots \hat{\theta_i}, \cdots , \theta_4 \rangle$.
 \item If there is a relation of type $\sum_{j \neq i} f_j \theta_j + g_i \theta =0$, then the coefficients $f_j \in S$ are uniquely determined by the identity %from \eqref{eq:prop_div} 
 $(-1)^j \det M( \theta_1, \cdots \hat{\theta_j}, \cdots,  \theta_4, \theta)=f_j f_{\CC}.$
 \end{enumerate}
  \end{prop}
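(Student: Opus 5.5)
The plan is to deduce all three items from Cramer-type identities for $3\times 3$ determinants, combined with the divisibility of Proposition \ref{prop:Delta|f} and the fact that $S$ is a domain. Write each $\theta_i$ as the row vector of its coefficients in $S^3$. Consider the $4\times 4$ matrix whose first three columns hold the coefficients of $\theta_1,\dots,\theta_4$ (one derivation per row), and whose fourth column repeats one of the first three columns. Its determinant is zero. Expanding along the repeated column gives
\[
\sum_{i=1}^4 (-1)^{i} \det M(\theta_1,\dots,\hat{\theta_i},\dots,\theta_4)\,\theta_i = 0,
\]
with the sign convention matched to the definition of $g_i$. Replacing each determinant by $g_{1\cdots\hat{i}\cdots 4} f_{\CC}$ via \eqref{eq:prop_div}, and cancelling the nonzero factor $f_{\CC}$ in the domain $S$, yields (1).

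For (2), fix $\theta\in D(\CC)$ and apply the same identity to the four derivations obtained from $\{\theta_1,\dots,\theta_4\}$ by replacing $\theta_i$ with $\theta$. The coefficient of $\theta$ is $\pm\det M(\theta_1,\dots,\hat{\theta_i},\dots,\theta_4) = \pm g_i f_{\CC}$. Every other coefficient is $\pm\det M$ of three elements of $D(\CC)$, so by Proposition \ref{prop:Delta|f} it equals $f_{\CC}$ times some element of $S$. Dividing by $f_{\CC}$ gives a relation
\[
g_i\theta + \sum_{j\neq i} f_j\theta_j = 0, \qquad f_j := (-1)^{j}\det M(\theta_1,\dots,\hat{\theta_j},\dots,\theta_4,\theta)/f_{\CC},
\]
up to sign bookkeeping, where in the hatted notation $\theta_i$ is understood to be replaced by $\theta$. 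Hence $g_i\theta\in\langle \theta_j : j\neq i\rangle$. The hypothesis $g_i\neq 0$ is what makes this statement nontrivial.

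For (3), uniqueness comes from linear independence. Since $g_i\neq 0$, we have $\det M(\theta_j : j\neq i)\neq 0$, so the three derivations $\theta_j$, $j\neq i$, are linearly independent over the fraction field of $S$. Therefore any relation $\sum_{j\ne i} f_j\theta_j + g_i\theta = 0$ has coefficients $f_j$ determined by $g_i\theta$. They are given by Cramer's rule: substitute $-g_i\theta$ for $\theta_j$ in $M$. By multilinearity this gives $f_j \det M(\theta_k : k \neq i) = \pm g_i \det M(\dots,\theta,\dots)$. Cancelling $g_i$, and using $\det M(\theta_k : k\neq i)=\pm g_i f_{\CC}$, yields $f_j f_{\CC} = (-1)^j\det M(\theta_1,\dots,\hat{\theta_j},\dots,\theta_4,\theta)$, after reordering rows to match the stated column order.

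I expect no conceptual obstacle. The only delicate step is consistent sign bookkeeping in the Laplace expansion and the row permutations. I would fix it once by writing the $4\times4$ determinant explicitly with $\theta$ in the last row, and then check it against the convention $g_i=(-1)^i g_{1\cdots\hat i\cdots4}$. The one place the curve setting enters, as opposed to arrangements, is Proposition \ref{prop:Delta|f}, which guarantees that each $f_j$ lies in $S$.
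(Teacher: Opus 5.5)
Your proposal is correct and follows essentially the same route as the paper, which defers to Chu's argument. You Laplace-expand a $4\times 4$ determinant with a repeated column to get the relation with coefficients $(-1)^i\det M(\theta_1,\ldots,\hat{\theta_i},\ldots,\theta_4)$, divide by $f_{\CC}$ using Proposition~\ref{prop:Delta|f}, and obtain (2)--(3) by applying this to the list with $\theta_i$ replaced by $\theta$, using the independence forced by $g_i\neq 0$. One harmless detail: Proposition~\ref{prop:Delta|f} is stated only for homogeneous derivations, so for a non-homogeneous $\theta$ split it into homogeneous components (which lie in $D(\CC)$ since it is graded) and use multilinearity.
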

 
\section{Main Results}
\label{sec:main}

\subsection{Generalized Saito criterion for curves}

In \cite{Chu} the author gives a Saito-type characterization of the POG property for arrangements, in particular obtaining the following result for arrangements of projective lines.

\begin{theorem}(\cite{Chu})
\label{thm:Chu-thm_1.4}
Let $\A$ be an arrangement of projective lines. Then $\A$ is SPOG  and the set $\{\theta_1, \cdots, \theta_4\}$ is a minimal generating system for $D(\A)$ that satisfies the minimal degree relation $\sum_{i=1}^4  g_i \theta_i =0$ if and only if $0 \neq g_4 \in S_1$ and $g_1, g_2, g_3 \in S_{>0}$ have no non-trivial divisor modulo $g_4$.
\end{theorem}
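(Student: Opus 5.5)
The plan is to prove both directions using the determinantal coefficients $g_i=(-1)^i g_{1\cdots\hat i\cdots 4}$ of Proposition \ref{prop:lemmas_extended_to_curves}. Throughout I read ``no non-trivial divisor modulo $g_4$'' as follows. Since $g_4\in S_1$, the quotient $\bar S:=S/(g_4)$ is a polynomial ring in two variables, hence a UFD. The condition is that the images $\bar g_1,\bar g_2,\bar g_3\in\bar S$ have no common non-unit factor.

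For ($\Rightarrow$), suppose $\A$ is SPOG with minimal generators $\theta_1,\dots,\theta_4$ and unique relation $\sum f_i\theta_i=0$, where $\deg f_4=1$. By Proposition \ref{prop:lemmas_extended_to_curves}(1), $(g_1,\dots,g_4)$ is also a syzygy. Since the syzygy module is generated by $(f_i)$, we get $(g_i)=c(f_i)$ for some $c\in S$.

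We first check that $c\neq 0$. If $c=0$, all $3\times 3$ minors of the $4\times 3$ coefficient matrix vanish, so $D(\A)$ has rank $\le 2$. This is impossible, since $f_{\A}Der(S)\subset D(\A)$.

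Next we count degrees. We have $\deg\det M(\theta_1,\theta_2,\theta_3)=d_1+d_2+d_3=1+d_2+d_3=d+1$, because $t=1$ by Theorem \ref{thm: gen_exp}. Hence $\deg g_4=1=\deg f_4$, so $c$ is a nonzero constant. Thus $g_4\in S_1\setminus 0$ and $g_1,g_2,g_3\in S_{>0}$, the latter by minimality of the generating system.

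For the divisor condition, suppose $\bar g_1,\bar g_2,\bar g_3$ had a common non-unit factor $\bar h$. Then $g_j=hq_j+g_4r_j$. From $\sum_{j\le 3}(hq_j+g_4r_j)\theta_j+g_4\theta_4=0$ we get $h\sum q_j\theta_j=-g_4(\theta_4+\sum r_j\theta_j)$. Since $g_4\nmid h$, this forces $\sum q_j\theta_j\in g_4 Der(S)$. One then checks that $\eta:=g_4^{-1}\sum q_j\theta_j$ lies in $D(\A)$, using $g_4\nmid f_{\A}$ or handling the case $g_4\mid f_\A$ directly. This $\eta$ has degree lower than $\theta_4$ and satisfies $\theta_4=-h\eta-\sum r_j\theta_j$, contradicting minimality.

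For ($\Leftarrow$), the core step is generation. Let $\theta\in D(\A)$. By Proposition \ref{prop:lemmas_extended_to_curves}(2), since $g_4\ne0$ we can write $g_4\theta=\sum_{j\le 3}a_j\theta_j$. Reducing modulo $g_4$ gives two relations among the restricted derivations $\bar\theta_1,\bar\theta_2,\bar\theta_3$ over $\bar S$:
\[
\sum_{j\le3}\bar a_j\bar\theta_j=0,\qquad \sum_{j\le 3}\bar g_j\bar\theta_j=0 .
\]
I then aim to show that the $3\times3$ matrix $\overline{M(\theta_1,\theta_2,\theta_3)}$ has rank exactly $2$ over $\mathrm{Frac}(\bar S)$. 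Granting this, $(\bar a_j)$ is proportional to $(\bar g_j)$. Because $\bar S$ is a UFD and the $\bar g_j$ have no common factor, $\bar a_j=\bar c\,\bar g_j$ for a polynomial $\bar c$. Lift $\bar c$ to $c$ and write $a_j-cg_j=g_4b_j$. Then
\[
g_4\theta=\sum_j (cg_j+g_4b_j)\theta_j=-cg_4\theta_4+g_4\sum_j b_j\theta_j,
\]
and dividing by $g_4$ gives $\theta\in\langle\theta_1,\dots,\theta_4\rangle$.

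To finish ($\Leftarrow$), the relation module has rank one, and $(g_i)$ generates it: any syzygy $(f_i)$ satisfies $f_4\det M(\theta_1,\theta_2,\theta_3)=\pm\,(\ldots)$ via Proposition \ref{prop:lemmas_extended_to_curves}(3), giving proportionality. Since the $g_i$ have positive degree, the generating system $\theta_1,\dots,\theta_4$ is minimal, and it satisfies the minimal-degree relation with a linear coefficient $g_4$. This is exactly SPOG.

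The main obstacle is the rank-$2$ claim modulo $g_4$. Rank $\le2$ is clear, since $\det=\pm g_4f_\A\equiv0$. Rank $\ge 2$ needs some $2\times2$ minor not divisible by $g_4$. I would first use the coprimality hypothesis: if all $2\times 2$ minors were divisible by $g_4$, the cofactor expansion of the minors $\det M(\theta_i,\theta_j,\theta_4)=\pm g_k f_\A$ along $\theta_4$ would make every $g_k$ divisible by $g_4$ when $g_4\nmid f_\A$. When $g_4\mid f_\A$ the argument must be adjusted by using $\theta_E$ and the decomposition \eqref{eq:theta_0}.
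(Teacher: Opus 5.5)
\paragraph{Review.} Your ($\Rightarrow$) direction is sound. The step $\eta\in D(\A)$ needs no case split: since $h\eta$ and $g_4\eta$ both lie in $D(\A)$ and $g_4\nmid h$, every linear factor of the squarefree $f_\A$ divides $\eta(f_\A)$.

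The genuine gap is in ($\Leftarrow$). When $g_4\mid f_\A$, the claim that $\overline{M(\theta_1,\theta_2,\theta_3)}$ has rank exactly $2$ modulo $g_4$ is not just unproven; it is false, so no adjustment via $\theta_E$ can save it. Here is a counterexample. Take $f_\A=xyz\alpha$ with $\alpha=x+y+z$, and set $\theta_1=\theta_E$, $\theta_2=\alpha x\partial_x$, $\theta_3=\alpha y\partial_y$, $\theta_4=xy(\partial_x-\partial_y)$.
\begin{itemize}
\item One computes $g_4=g_{123}=\alpha$ and $(g_1,g_2,g_3)=(0,-y,x)$, which is coprime modulo $\alpha$.
\item These four derivations really are a minimal generating system, with relation $-y\theta_2+x\theta_3+\alpha\theta_4=0$.
\item Yet $\theta_2\equiv\theta_3\equiv 0 \bmod \alpha$, so the reduced matrix has rank $1$. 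Its left kernel is $2$-dimensional, and nothing forces $(\bar a_j)$ to be proportional to $(\bar g_j)$.
\end{itemize}
Replacing $\theta_2$ by $\theta_2+x\theta_E$ makes $g_1=xy\neq0$ and changes nothing.

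This is not an accident; it is the shape of Abe's addition case. Adding a line $H$ to a free arrangement gives, when the result is not free, a POG arrangement whose relation reads $\alpha_H\psi=\dots$, so $g_4=\alpha_H\mid f_\A$. For the braid arrangement $xyz(x-y)(x-z)(y-z)$ plus a general line, $\psi$ is the unique generator of top degree. So $g_4$ is forced, and every other generator reduces to a multiple of $\bar\theta_E$ modulo $\alpha_H$. The rank-$2$ claim therefore fails for every choice of generators there.

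\paragraph{How to repair it.} What you actually need is only $\bar a_j\bar g_k=\bar a_k\bar g_j$ in $\bar S$, and this holds with no rank hypothesis. Put $\theta_5:=\theta$ and $[ijk]:=\det M(\theta_i,\theta_j,\theta_k)$.
\begin{itemize}
\item Since $[123]=g_4f_\A$, Cramer's rule gives $a_1=[235]/f_\A$ and $a_2=-[135]/f_\A$.
\item By definition, $g_1=-[234]/f_\A$ and $g_2=[134]/f_\A$.
\item The three-term Grassmann--Pl\"ucker identity $[134][235]-[135][234]=[123][345]$ then yields $a_1g_2-a_2g_1=g_4\,g_{345}$.
\item The other pairs are analogous. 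Proposition \ref{prop:Delta|f} is what guarantees that $g_{345}$ is a polynomial.
\end{itemize}
With this in place of the rank argument, your UFD step in $\bar S$ and your lifting step go through unchanged. The proof then becomes exactly ``divisibility plus linear algebra'', which is how the paper describes Chu's argument; the paper itself does not reprove the theorem.

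In your final step, make the integrality explicit as well. Cramer gives $f_ig_4=f_4g_i$ for any syzygy $(f_i)$. If $g_4\nmid f_4$, then $g_4$ would divide $g_1,g_2,g_3$, contradicting the hypothesis.
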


The proof  from \cite{Chu} of Theorem \ref{thm:Chu-thm_1.4}  works verbatim for plane projective curves as well, since it essentially uses a divisibility property as the one proved in Proposition \ref{prop:Delta|f}, alongside linear algebra.  So the above result can be restated as is, but for complex projective plane curves. \\

Actually, we can state and prove a variation of the above result that does not involve (directly) a fourth derivation and holds also for curves.

\begin{theorem}
\label{thm:Chu_thm_consequence}
Let $\CC$ be a non-free curve.  Then $\CC$ is POG if and only if there exists a set of homogeneous derivations $\{\theta_1, \theta_2, \theta_3 \} \subset D(\CC)$  such that $0 \neq g_{123} \in S_1$. 
\end{theorem}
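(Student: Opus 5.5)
Both directions reduce to Theorem~\ref{thm:Chu-thm_1.4} for curves (valid verbatim, as noted above). The key link is Proposition~\ref{prop:lemmas_extended_to_curves}: for any four homogeneous derivations, the coefficients $g_i=(-1)^i g_{1\cdots\hat i\cdots 4}$ give a relation $\sum g_i\theta_i=0$.

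\emph{($\Rightarrow$)} Let $\CC$ be POG, with minimal generators $\theta_1,\dots,\theta_4$ and unique relation \eqref{eq:POG_rel}, where $\deg f_4=1$ and all $f_i\neq 0$ (SPOG, \cite[Corollary 2.2]{DS0}). By Proposition~\ref{prop:lemmas_extended_to_curves}(1), $\sum g_i\theta_i=0$ with $g_4=g_{123}$. By Theorem~\ref{thm:Chu-thm_1.4} for curves, the minimal-degree relation has $0\neq g_4\in S_1$. So $g_{123}\in S_1\setminus\{0\}$, and $\{\theta_1,\theta_2,\theta_3\}$ is the required triple.

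If I wish to avoid quoting the full theorem, I would argue directly. Degree bookkeeping gives $\deg g_{123}=d_1+d_2+d_3-d$, where $d_1,d_2,d_3$ are the degrees of $\theta_1,\theta_2,\theta_3$ (the first three generalized exponents). For a POG curve, $d_2+d_3=d$ by Theorem~\ref{thm: gen_exp}, and $d_1=1$ since $\theta_E$ may be taken among the generators. Hence $\deg g_{123}=1$. To see $g_{123}\neq 0$: if it vanished, the syzygy $\sum g_i\theta_i$ would be a multiple of the unique relation, forcing $f_4=0$, a contradiction. Alternatively, if $\theta_1,\theta_2,\theta_3$ were linearly dependent over $S$, the rank would be too small for $D(\CC)$, which has rank $3$.

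\emph{($\Leftarrow$)} Suppose $0\neq g_{123}\in S_1$ for homogeneous $\theta_1,\theta_2,\theta_3\in D(\CC)$. Then $\theta_1,\theta_2,\theta_3$ are independent and $\deg\theta_1+\deg\theta_2+\deg\theta_3=d+1$. I would take $\theta_4$ to be any element of a minimal generating set not in $\langle\theta_1,\theta_2,\theta_3\rangle$; such an element exists because $\CC$ is non-free. Indeed, if the three generated $D(\CC)$, Saito's criterion would fail only because the determinant is $g_{123}f_\CC$ and not a constant times $f_\CC$. More precisely, $\langle\theta_1,\theta_2,\theta_3\rangle$ would be free with $\deg\det=d+1$, while a free $D(\CC)$ has $\deg\det=d$, a contradiction.

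By Proposition~\ref{prop:lemmas_extended_to_curves}(2), $g_{123}\theta\in\langle\theta_1,\theta_2,\theta_3\rangle$ for every $\theta\in D(\CC)$. Since $g_{123}$ is linear, I aim to show that $\{\theta_1,\theta_2,\theta_3,\theta_4\}$ generates $D(\CC)$ with the single relation $g_{123}\theta_4=\sum_{j\le 3} f_j\theta_j$. Then Theorem~\ref{thm:Chu-thm_1.4} applies, with $g_4=g_{123}\in S_1$; its coprimality condition modulo a linear form means that $g_1,g_2,g_3$ are not all divisible by $g_{123}$.

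The cleaner route is through the type. I would bound $d_2+d_3\le d$ using minimality of degrees: the minimal generators of degrees $d_1\le d_2\le d_3$ satisfy $d_1+d_2+d_3\le \deg\theta_1+\deg\theta_2+\deg\theta_3$ after ordering, provided the $\theta_i$ are independent. This gives $t(\CC)\le 1$. Non-freeness gives $t(\CC)\ge 1$ by Theorem~\ref{thm: gen_exp}, so $t(\CC)=1$ and $\CC$ is POG.

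\emph{Main obstacle.} The key step is the inequality $d_2+d_3\le \deg\theta_2+\deg\theta_3$ after normalizing $\theta_1$. I would handle it as follows:
\begin{itemize}
\item First replace $\theta_1$ by $\theta_E$. This is possible because modifying the triple by \eqref{eq:theta_0}, as in the proof of Proposition~\ref{prop:Delta|f}, shows that some triple containing $\theta_E$ still has nonzero linear $g$ and the same degree sum.
\item Then apply the standard fact that independent homogeneous elements of degrees $e_2\le e_3$ in $D_0(\CC)$ force $d_2\le e_2$ and $d_3\le e_3$, since otherwise every element of degree $\le e_3$ would lie in a rank-$\le 1$ submodule.
\end{itemize}
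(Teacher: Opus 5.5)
Your proposal is correct and, once the detours are set aside, it is the paper's argument. For ($\Rightarrow$) it is the degree count $\deg g_{123}=1+d_2+d_3-d=1$ plus independence of the first three minimal generators; for ($\Leftarrow$) it is the inequality $\deg\theta_1+\deg\theta_2+\deg\theta_3\ge 1+d_2+d_3$ for independent homogeneous elements, which gives $t(\CC)\le 1$ and hence POG since $\CC$ is not free.

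Two small points. First, the normalization $\theta_1\mapsto\theta_E$ in your ``main obstacle'' is unnecessary: your rank argument for $D_0(\CC)$ applies verbatim to three independent elements of $D(\CC)$ compared against its minimal generators of degrees $1\le d_2\le d_3$. Second, in ($\Rightarrow$) the clean justification of $g_{123}\neq 0$ is that a dependence among $\theta_1,\theta_2,\theta_3$ would be a nonzero syzygy with vanishing fourth coefficient. That is impossible, because every syzygy is a multiple of $(f_1,\dots,f_4)$ with $f_4\neq 0$. Your ``rank too small'' alternative does not work on its own, since $\theta_4$ could supply the missing rank.
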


\begin{proof}

Let us prove first the direct implication. If $\CC$ is POG with generalized  exponents $(1,d_2,d_3, d_4)$ then choose from a minimal system of generators for $D(\CC)$ three homogeneous derivations $\theta_1, \theta_2, \theta_3$, such that $\deg(\theta_1) =1$, $\deg(\theta_2) =d_2$ and $\deg(\theta_3) =d_3$. 
 Recall that, since $\CC$ is POG, we have the identity $d_2+d_3 = \deg(\CC)$. Then $\deg(g_{123}) = 1+d_2+d_3 - \deg(\CC) = 1$. Moreover, since $\theta_1, \theta_2, \theta_3$ are S-independent, $g_{123} \neq 0$.
 \smallskip
 
 Conversely, assume that $\CC$ is non-free and such that there exists a set of homogeneous derivations $\{\theta_1, \theta_2, \theta_3\} \subset D(\CC)$ with $0 \neq g_{123} \in S_1$.  The condition $0 \neq g_{123}$ implies that the derivations $\theta_1, \theta_2, \theta_3$ are $S$-independent and the condition $g_{123} \in S_1$ implies that $\deg(\theta_1) + \deg(\theta_2) + \deg(\theta_3) = d+1$.  Keeping in mind that $\theta_1, \theta_2, \theta_3$ are $S$-independent, if $1 \leq d_2 \leq \cdots \leq d_r$ denote the generalized exponents of $\CC$, then the following inequality holds  
 $$\deg(\theta_1) + \deg(\theta_2) + \deg(\theta_3) \geq 1+d_2+d_3,$$
  hence $d+1 \geq 1+d_2+d_3$. Since $\CC$ is not free, this implies, by Theorem \ref{thm: gen_exp}, that $\CC$ must necessarily be POG.
 \end{proof}

\subsection{Bourbaki ideals and type $k$ curves} 
\label{ss:bourbaki}

For an arbitrary curve $\CC$ with  generalized exponents $1=d_1 \leq d_2 \leq \cdots \leq d_r$, a minimal resolution of $D(\CC)$ is of type:
\begin{equation}
\label{eq:res_arb_curve}
0 \rightarrow  \bigoplus_{i=2}^{r-2} S[-e_i] \rightarrow \bigoplus_{i=1}^{r} S[-d_i] \rightarrow D(\CC) \rightarrow 0
\end{equation}
where $e_2 \leq \cdots \leq e_{r-2}$.
By  \cite[Thm 0.2]{BM},  $$d_2 +d_3 = d-1+ \sum_{i=2}^{r-2} e_i - \sum_{i=2}^{r-2} d_{i+2} = d+1 + [\sum_{i=2}^{r-2}( e_i - d_{i+2})-2],$$
 and, by   \cite{HS}, %see also \cite{DS0},
  $e_i - d_{i+2} \geq 1, \; \forall i \in \overline{2,r-2}$.
  
 It follows  that a type $k$ curve is a curve such that  $ \sum_{i=2}^{r-2} (e_i - d_{i+2}) = k$. In particular, a type 2 curve  is  precisely a curve that satisfies one of the two conditions: either there exists a unique $i$ such that $ e_i - d_{i+2} = 2$ or there exists a pair $i \neq j$ such that
$ e_i - d_{i+2} = 1$ and $ e_j- d_{j+2} = 1$.% In the first case, $\CC$ is a three syzygy curve and in the second case, $\CC$ is a four syzygy curve.
 To summarize, in the notations from \eqref{eq:res_arb_curve}, if $\CC$ is a type $2$ curve, then either $r=4$ and $e_2 = d_4+2$, in which case $\CC$ is called of {\it type $2A$}, or $r=5$ and $e_2 = d_4 +1, \; e_3 = d_5+1$, in which case $\CC$ is called of {\it type $2B$}, by the convention established in \cite[Prop 1.12]{ADP}.

\begin{remark}
\label{rem:counter}
 \cite[Conjecture 4.2]{Chu}, which states  that, for a projective line arrangement $\A$, a resolution of $D(\A)$ is necessarily of the following type
$$0 \rightarrow  \bigoplus_{i=2}^{r-2} S[-d_{i+2}-1] \rightarrow  \bigoplus_{i=1}^{r} S[-d_i] \rightarrow D(\CC) \rightarrow 0 $$ 
does not hold for curves.  It also does not hold for arrangements of projective lines (equivalently, central arrangements of planes in $\C^3$). Any type $2A$ curve or projective line arrangement is a counterexample.  See for instance \cite[Theorems 3.1, 3.3 and Remarks 3.3, 3.7]{M} for a recipe to construct such counterexamples, by addition-deletion of a smooth conic to a free curve.
\end{remark}

\begin{example}
%Some yype $2A/2B$ curves, obtained by addition-deletion as in \cite{M}.
%The curve 
$\CC:  (x^2 + 2xy + y^2 + xz)(x^2 + xz + yz)(x^2 + xy + z^2)(x+y-z)y(x+z)(2x+y)(x^2 + 2xy - xz + yz)=0$ is a type $2A$ curve with generalized exponents $(1,6,7,7)$, which can be obtained by deletion of a smooth conic from a free curve, see  \cite[Example 4.4]{M}.
\end{example}

\subsubsection{The ideal $J(G)$}
\label{ss:J(G)}
Going back to the case of an arbitrary curve $\CC$, let $G = \{\theta_1 = \theta_E, \theta_2, \cdots, \theta_r\}$ be a minimal system of generators for $D(\CC)$, with degrees $1=d_1 \leq d_2 \leq \cdots \leq d_r$.  Denote by $\theta^0_i$ the derivation in $D_0(\CC)$ corresponding to $\theta_i$ via \eqref{eq:theta_0}  and by $G^0 = \{\theta_1 = \theta_E, \theta^0_2, \cdots, \theta^0_r\}$. Obviously, $G^0$ is also a minimal system of generators for $D(\CC)$.   We recall that $ d_2$, the minimal degree of a derivation in $D_0(\CC)$, is denoted by $mdr(f_{\CC})$.

The definition of type $2$ curves can be formulated in terms of an associated Bourbaki ideal, see for instance \cite{DS0, ADP} for details. Briefly, given a homogeneous derivation $\theta^0 \in D_0(\CC)$ of degree $mdr(f_{\CC})$, one defines the Bourbaki ideal associated to $\theta^0$,  denoted $B(\CC, \theta^0)$, as the homogeneous ideal in $S$ 
$$B(\CC, \theta^0): =\langle  g_{\phi} \;| \; \det M(\theta_E, \theta^0, \phi) = g_{\phi} f_{\CC}, \;  \phi \in D_0(\CC) \rangle$$
Corollary \ref{cor:det} ensures us that the Bourbaki ideal  is well defined, meaning that, for any $\phi \in D_0(\CC)$, there exists $g_{\phi} \in S$ such that
$\det M(\theta_E, \theta^0, \phi) = g_{\phi} f_{\CC}$. 
Given an arbitrary homogeneous ideal $I \subset S$, denote by $Indeg(I)$ the minimal degree of a non-zero element of $I$, called {\it initial degree} of $I$. With this definition, the type of the curve $\CC$ coincides to the initial degree of $B(\CC, \theta^0)$.

With the notations from the beginning of  ~\S\ref{ss:J(G)},  let $B(\CC, \theta^0_2)$ be the Bourbaki ideal of $\CC$ associated to $\theta^0_2$ and denote 
$$J(G):=\langle g_{ijk}| \; i \neq j  \neq k \in \overline{1,r} \rangle.$$
Furthermore, denote  $g_{ij}:= g_{1ij}$. So $B(\CC, \theta^0) = \langle g_{2k} |\;  k \in  \overline{1,r} \rangle \subset J(G)$.

\begin{lemma}
\label{lem:indeg}
\begin{enumerate}
\item  $Indeg J(G)  = t(\CC)$
\item  $J(G) = J(G^0)$
\end{enumerate}
\end{lemma}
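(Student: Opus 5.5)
I would prove (2) first, since (1) is easiest to check in the normalized generating system $G^0$. For (2), the plan is to compare the determinants $\det M(\theta_i,\theta_j,\theta_k)$ computed from $G$ and from $G^0$. Write $\theta_i=\theta_i^0+d^{-1}h_i\theta_E$ for $i\ge 2$, and $\theta_1=\theta_E$, where $\theta_i(f_\CC)=h_if_\CC$. By multilinearity,
$$\det M(\theta_i,\theta_j,\theta_k)=\det M(\theta^0_i,\theta^0_j,\theta^0_k)+\sum \pm d^{-1}h_\ast \det M(\theta_E,\theta^0_\ast,\theta^0_\ast),$$
with the terms carrying $\theta_E$ twice vanishing. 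Dividing by $f_\CC$ shows that each $g_{ijk}$ for $G$ is an $S$-combination of the $g_{ijk}$ for $G^0$. Conversely, $\theta_i^0=\theta_i-d^{-1}h_i\theta_E$ and $\theta_E\in G$ give the reverse inclusion. Hence $J(G)=J(G^0)$. In $G^0$, Corollary \ref{cor:det}(1) kills every triple without $\theta_E$, so $J(G^0)=\langle g_{1jk}\rangle$.

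For (1), the lower bound $Indeg J(G)\ge t(\CC)$ follows from degrees. Each nonzero $g_{1jk}$ has degree $1+d_j+d_k-d$. If $g_{1jk}\neq0$ then $j\ne k$, and both are at least $2$. Since the exponents are ordered, $d_j+d_k\ge d_2+d_3$, so $\deg g_{1jk}\ge d_2+d_3-d+1=t(\CC)$. Because the ideal is generated by homogeneous elements, every nonzero element has degree at least the minimal degree of a nonzero generator.

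For the upper bound I must show that some generator of that degree is nonzero. The candidate is $g_{123}$, i.e. that $\theta_E,\theta_2^0,\theta_3^0$ are $S$-independent. This is the main obstacle, since a priori $\theta_3^0$ could be a polynomial multiple of $\theta_2^0$ over the fraction field. I would handle it as follows.

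\begin{itemize}
\item $D_0(\CC)$ has rank $2$, and $\theta^0_2$ has minimal degree $mdr(f_\CC)$. Arguing as in the classical setting, $\theta^0_2$ can be taken primitive: if $\theta_2^0=p\psi$ with $\deg p>0$, then $\psi\in D_0(\CC)$ has smaller degree, a contradiction.
\item If all $g_{12k}$ vanished, every $\theta_k^0$ would be proportional to $\theta^0_2$ over the fraction field. Primitivity together with reflexivity (saturation) of $D_0(\CC)$ then gives $\theta_k^0\in S\theta_2^0$. So $D_0(\CC)$ would have rank $1$, which is false.
\item Hence some $\theta^0_k$ is independent from $\theta^0_2$. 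Among such $k$ choose the minimal one, $k_0$, and check that $d_{k_0}=d_3$.
\end{itemize}

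The last point holds because the generators of degree below $d_{k_0}$ are multiples of $\theta_2^0$, and minimality of $G^0$ forces them to equal $\theta^0_2$ itself. So $k_0=3$, $g_{123}\ne0$, and its degree is $t(\CC)$. This also matches the remark that $t(\CC)=Indeg B(\CC,\theta_2^0)\subseteq J(G)$, which one could cite directly instead.
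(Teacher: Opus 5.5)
Your proof is correct. Part (2) is the same multilinearity argument the paper uses. The paper notes that $\det M(\theta_1,\theta_j,\theta_k)=\det M(\theta_1,\theta^0_j,\theta^0_k)$ and that $g_{ijk}\in\langle g_{1ij},g_{1jk},g_{1ik}\rangle$ for $i,j,k\ge 2$; your coefficients $d^{-1}h_\ast$ in that expansion are the right ones.

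For (1) the paper only says the claim is obvious. It takes $t(\CC)=Indeg\, B(\CC,\theta^0_2)$ as known and uses $B(\CC,\theta^0_2)=\langle g_{12k}\rangle\subseteq J(G)$ for the upper bound, together with your degree count for the lower bound. You instead prove directly that $g_{123}\neq 0$. You use primitivity of $\theta^0_2$, which follows from minimality of $mdr(f_\CC)$, and minimality of the generating system. This is essentially the argument the paper gives later in Lemma~\ref{lem:non_rel_thetaE23}.

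Your route is self-contained and exhibits an explicit generator of $J(G)$ of degree $t(\CC)$. In particular it shows $J(G)\neq 0$, which is needed for $Indeg$ to make sense. The paper's route is shorter but depends on the cited identification of the type with the initial degree of the Bourbaki ideal.

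Two simplifications are available. The rank-two argument and the choice of a minimal $k_0$ are unnecessary. If $g_{123}=0$, then $\theta^0_3$ is proportional to $\theta^0_2$ over the fraction field, so primitivity gives $\theta^0_3\in S\theta^0_2$, which contradicts minimality at once. Also, reflexivity of $D_0(\CC)$ plays no role: primitivity of $\theta^0_2$ and unique factorization in $S$ suffice.
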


\begin{proof}
(1) This is obvious, since $t(\CC) = Indeg B(\CC, \theta^0_2) = 1+d_2+d_3-d$ and $G$ is a minimal generating set for $D(\CC)$, where the degree $d$ curve $\CC$ has generalized exponents  $1 \leq d_2 \leq \cdots \leq d_r$.

(2) It is enough to notice that $\det M(\theta_1, \theta_j, \theta_k) = \det M(\theta_1, \theta^0_j, \theta^0_k)$ and $g_{ijk} \in \langle g_{ij}, g_{jk}, g_{ik} \rangle$, for any $i,j,k \geq 2$.
\end{proof}

\begin{remark}
In general, if $G$ is a system of generators for $D(\CC)$,  $B(\CC, \theta^0_2) \subsetneq J(G)$.
\end{remark}

In the above statements %, Lemma \ref{lem:indeg} and Corollary \ref{cor:indeg2},  
$G$ was assumed to be a system of generators for $D(\CC).$
But even for an arbitrary subset of homogeneous derivations $H \subset D(\CC)$  we can define an associated ideal $J(H)$ analogously, by  $J(H): =\langle g_{ijk}| \{\theta_i, \theta_j, \theta_k\} \subset H \rangle$. In this case, if $J(H) \neq 0$, obviously $Indeg(J(H)) \geq t(\CC)$ and we immediately have the following.

\begin{prop}
\label{thm:type_leq2_char}
 $\CC$ is of type at most $k$ if and only if there exists $H \subset D(\CC)$ such that $Indeg(J(H))=k$.
\end{prop}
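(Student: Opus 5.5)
The plan is to prove the two implications separately. Both rest on one degree count: for homogeneous $\theta_i,\theta_j,\theta_k\in D(\CC)$ with $g_{ijk}\neq 0$,
$$\deg g_{ijk}=\deg\theta_i+\deg\theta_j+\deg\theta_k-d .$$

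First I establish the inequality $Indeg(J(H))\geq t(\CC)$ whenever $J(H)\neq 0$, which the paper calls obvious. Suppose $g_{ijk}\neq 0$. Then $\det M(\theta_i,\theta_j,\theta_k)\neq 0$, so $\theta_i,\theta_j,\theta_k$ are $S$-independent. The argument in the proof of Theorem \ref{thm:Chu_thm_consequence} then gives
$$\deg\theta_i+\deg\theta_j+\deg\theta_k\geq 1+d_2+d_3 .$$
Hence $\deg g_{ijk}\geq 1+d_2+d_3-d=t(\CC)$. Every nonzero homogeneous element of $J(H)$ is an $S$-combination of such generators, so its degree is at least $t(\CC)$.

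The implication "$\Leftarrow$" follows at once. If $Indeg(J(H))=k$, then $J(H)\neq 0$, so $t(\CC)\leq Indeg(J(H))=k$.

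For "$\Rightarrow$", assume $t:=t(\CC)\leq k$. Take a minimal generating system $G$ as in \S\ref{ss:J(G)}. By Lemma \ref{lem:indeg}(1), $Indeg J(G)=t$. Because $J(G)$ is generated by the homogeneous elements $g_{ijk}$, a nonzero element of minimal degree $t$ is an $S$-combination in which only generators of degree $\leq t$ can contribute to the degree-$t$ part. Generators of degree $<t$ must vanish by minimality. So some generator $g_{ijk}$ is nonzero of degree exactly $t$; concretely one can take $\{\theta_E,\theta_2,\theta_3\}$, whose $g_{123}$ has degree $1+d_2+d_3-d=t$ and is nonzero because $\theta_E,\theta_2,\theta_3$ are independent.

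Now fix a linear form $\ell\in S_1$ and set
$$H:=\{\theta_i,\theta_j,\ell^{\,k-t}\theta_k\}.$$
By multilinearity of the determinant,
$$\det M(\theta_i,\theta_j,\ell^{k-t}\theta_k)=\ell^{k-t}g_{ijk}f_{\CC}.$$
So $J(H)$ is the principal ideal generated by $\ell^{k-t}g_{ijk}$. This generator is nonzero of degree $k$, hence $Indeg(J(H))=k$.

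The only delicate point is ensuring that the generator realizing the minimal degree is genuinely nonzero. The independence of the chosen minimal generators, or alternatively the minimal-degree argument above, takes care of this.
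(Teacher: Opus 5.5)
Your proof is correct and follows the paper's own one-line argument: the degree count $\deg g_{ijk}=\deg\theta_i+\deg\theta_j+\deg\theta_k-d$, combined with the bound $\deg\theta_i+\deg\theta_j+\deg\theta_k\ge 1+d_2+d_3$ for $S$-independent triples, gives $Indeg(J(H))\ge t(\CC)$ and hence the reverse implication, while a minimal generating system realizes $t(\CC)$ for the direct implication. Replacing $\theta_k$ by $\ell^{k-t}\theta_k$ supplies a detail the paper leaves implicit, namely how to obtain initial degree exactly $k$ when $t(\CC)<k$.
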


\begin{remark}
Let $H \subset D(\CC)$ be such that $Indeg(J(H))=2$. If $\CC$ is free or POG, then $H$ cannot be a system of generators for $D(\CC)$.
\end{remark}

\begin{lemma}
\label{lem:non_rel_thetaE23}
If $\{\theta_E, \theta_2, \theta_3 , \cdots, \theta_r\}$ is a minimal generating system of $ D(\CC)$ such that $1 \leq \deg(\theta_2) \leq \deg(\theta_3) \leq \cdots \leq \deg(\theta_r)$, then there is no non-trivial relation that involves only $\{\theta_E, \theta_2, \theta_3\}$.
\end{lemma}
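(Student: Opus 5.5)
The idea is to pass to the generating system $G^0$ of \S\ref{ss:J(G)}, i.e.\ to the components in $D_0(\CC)$. There the coefficient of $\theta_E$ is forced to vanish by evaluating on $f_{\CC}$. What remains is a relation between two elements of $D_0(\CC)$, which minimality of the generating system rules out.

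\textbf{Step 1: reduce to $D_0(\CC)$.} Suppose $a\theta_E + b\theta_2 + c\theta_3 = 0$ with $a,b,c \in S$. Write $\theta_i = \theta^0_i + d^{-1}g_i\theta_E$ as in \eqref{eq:theta_0}, for $i=2,3$. Substituting gives a relation $a'\theta_E + b\theta^0_2 + c\theta^0_3 = 0$, where $a' = a + d^{-1}(bg_2 + cg_3)$. It suffices to show $a'=b=c=0$, since then $a=0$ as well. Apply the relation to $f_{\CC}$. Because $\theta^0_i(f_{\CC})=0$ and $\theta_E(f_{\CC}) = d f_{\CC}$, we get $a' d f_{\CC} = 0$, hence $a'=0$ since $S$ is a domain. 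So we are left with $b\theta^0_2 + c\theta^0_3 = 0$. If $c=0$, then $b\theta^0_2=0$ forces $b=0$ (note $\theta^0_2\neq 0$ by minimality of $G^0$). So assume $c \neq 0$.

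\textbf{Step 2: primitivity of $\theta^0_2$; this is the main point.} First, $\deg\theta^0_2 = d_2 = mdr(f_{\CC})$. Indeed, any nonzero homogeneous element of $D_0(\CC)$ is an $S$-combination of $\theta_E,\theta_2,\dots,\theta_r$ and is not a multiple of $\theta_E$ (it kills $f_{\CC}$). Hence it involves some $\theta_i$ with $i \geq 2$, so its degree is at least $d_2$.

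Next, suppose the coefficients of $\theta^0_2$ had a nonconstant common factor $h$, so $\theta^0_2 = h\psi$. Then $h\,\psi(f_{\CC}) = 0$ gives $\psi(f_{\CC})=0$, so $\psi \in D_0(\CC)$ has degree less than $mdr(f_{\CC})$, a contradiction. Thus the coefficient vector $v_2\in S^3$ of $\theta^0_2$ has coprime entries.

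\textbf{Step 3: conclude.} Let $v_3$ be the coefficient vector of $\theta^0_3$, so $c v_3 = -b v_2$. Divide by $\gcd(b,c)$ to get $c' v_3 = -b' v_2$ with $\gcd(b',c')=1$. Then $c'$ divides $b'v_2$, hence divides every entry of $v_2$, so $c'$ is a nonzero constant by Step 2. Therefore $\theta^0_3 \in S\theta^0_2$. This gives
$$\theta_3 = \theta^0_3 + d^{-1}g_3\theta_E \in \langle \theta_E, \theta_2\rangle,$$
contradicting the minimality of $\{\theta_E,\theta_2,\dots,\theta_r\}$. Hence $b=c=0$ as well, and the only relation among $\theta_E,\theta_2,\theta_3$ is the trivial one.
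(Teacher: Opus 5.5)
Your proof is correct and follows essentially the same route as the paper. Both arguments pass to $D_0(\CC)$ and clear the $\gcd$ of the coefficients. Both then use minimality of $mdr(f_{\CC})$ to force the remaining cofactor to be a constant, conclude that $\theta^0_3 \in S\theta^0_2$, and so contradict minimality of the generating system. Your version also spells out details the paper leaves implicit: that the $\theta_E$-coefficient vanishes, that $\deg\theta_2 = mdr(f_{\CC})$, and that $\theta^0_2$ has coprime coefficients.
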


\begin{proof}
Assume to the contrary that there is a relation $\alpha_E \theta_E + \alpha_2 \theta_2 + \alpha_3 \theta_3 =0$, $\alpha_E, \alpha_2, \alpha_3 \in S$. Since $D(\CC) = D_0(\CC) \oplus S \theta_E$ , the relation implies that $\alpha_2 \theta^0_2 + \alpha_3 \theta^0_3 =0$, where $\theta_i := \theta^0_i + \deg( f_{\CC})^{-1} g  \theta_E, \; i=2,3$, see \eqref{eq:theta_0}. Let $\alpha = \gcd(\alpha_2,  \alpha_3)$. Then there exist $\alpha'_2,  \alpha'_3 \in S, \; \gcd(\alpha'_2,  \alpha'_3) = 1$ and $\alpha_2 = \alpha \alpha'_2, \alpha_3 = \alpha \alpha'_3$. Hence $\alpha'_3$ divides $\theta^0_2 $, i.e. there exists $\theta^{01}_2 \in D_0(\CC)$ such that  $\theta^0_2 = \alpha'_3 \theta^{01}_2$. If $\deg(\alpha'_3) > 0$ that would contradict the minimality of the degree of $\theta^0_2$, which is supposed to be $mdr(f_{\CC})$. So necessarily $\alpha'_3 \in \C^*$. But then $ \theta^0_2, \theta^0_3$ are S-dependent, which contradicts the minimality of the generating system. In conclusion a relation of type $\alpha_E \theta_E + \alpha_2 \theta_2 + \alpha_3 \theta_3 =0$ must have zero coefficients.\\
\end{proof}

\begin{remark}
\label{rem:rel_2B}
If  $\CC$ is a type $2B$ curve, say, with minimal generating system for  $D(\CC)$ consisting of the degree ordered derivations $\{\theta_E, \theta_2, \theta_3, \theta_4, \theta_5\}$ then, by Lemma \ref{lem:non_rel_thetaE23}, the two generating syzygies of degrees $e_2,e_3$ (see \eqref{eq:res_arb_curve}) necessarily involve terms with non-trivial coefficients for either  $\theta_4$ or  $\theta_5$.  
Consider the  \ref{prop:lemmas_extended_to_curves}(1) type relations for $\{\theta_E, \theta_2, \theta_3, \theta_4\}$, respectively $\{\theta_E, \theta_2, \theta_3, \theta_5\}$.  The (degree $2$) coefficients of $\theta_4, \theta_5$ in these relations are equal, up to $\C^*$, to $g_{123}$.
\end{remark}

\begin{question}
It would be interesting to have a characterization of type $2$ curves along the lines of Theorem \ref{thm:Chu_thm_consequence}. The above computations from \ref{lem:non_rel_thetaE23} and \ref{rem:rel_2B} might be of relevance for this endeavour.
\end{question}

\bigskip

Anca~M\u acinic,
Simion Stoilow Institute of Mathematics of the Romanian Academy, 
P.O. Box 1-764, RO-014700 Bucharest, Romania. \\
\nopagebreak
\textit{E-mail address:} \texttt{anca.macinic@imar.ro}


\begin{thebibliography}{00}
 
\bibitem{A:POG} T.~Abe,
{\em Plus-one generated and next to free arrangements of hyperplanes},
Int. Math. Res. Not.  (2021),  Vol. {\bf 2021}, Issue 12,  9233--9261.

%\bibitem{AD} T.~Abe, A.~Dimca, {\em Splitting types of bundles of logarithmic vector fields along plane curves}, Internat. J. Math. {\bf 29(8)} (2018),  Art. Id. 1850055.

\bibitem{ADen} T. ~Abe,  G. ~Denham,
{\em  Deletion-Restriction for Logarithmic Forms on Multiarrangements}, 
Advances in Applied Mathematics
Article: 103114, Volume 179 (2026).

\bibitem{ADP} T.~Abe, A.~Dimca, P.~Pokora,
{\em A new hierarchy for complex plane curves}, 
arXiv:2410.11479

\bibitem{AIM}  T.~Abe, D.~Ibadula, A.~Macinic, {\em On some freeness-type properties for line arrangements},  Ann. Sc. Norm. Super. Pisa Cl. Sci. (5) Vol. XXV (2024), 427--447.
   
 \bibitem{Chu} J.~Chu,
 {\em Generalizing Saito’s Criterion for Nonfree Arrangements}, arXiv:2603.21101 (2026)
 
\bibitem{Dim} A.~Dimca,
{\em On plus-one generated curves arising from free curves},
Bulletin of Mathematical Sciences, https://doi.org/10.1142/S1664360724500073.

\bibitem{DS0} A.~ Dimca, G.~ Sticlaru,
{\em Plane curves with three syzygies, minimal Tjurina curves curves, and nearly cuspidal curves},
 Geom. Dedicata  {\bf 207} (2020), 29--49.
 
\bibitem{DS1}  A.~ Dimca, G.~ Sticlaru,
{\em Plus-One Generated Curves, Briançon-Type Polynomials and Eigenscheme Ideals},
 Results Math. 80, 51 (2025).
 
\bibitem{HS} S. H. Hassanzadeh, A. Simis, 
{\em Plane Cremona maps: Saturation and regularity of the base ideal}, 
J. Algebra 371 (2012), 620--652.
 
  \bibitem{M} A. ~M\u{a}cinic, 
  {\em Addition-deletion of a smooth conic for free curves}, 
  Nagoya Mathematical Journal, Volume 261 (2026) e20
 
 \bibitem{MP1} A. ~M\u{a}cinic, P.~Pokora, {\em On plus-one generated conic-line arrangements with simple singularities} arXiv: 2309.15228,  https://arxiv.org/abs/2309.15228, to appear in Rendiconti Lincei – Matematica e Applicazioni
 
 \bibitem{MP2}  A. ~M\u{a}cinic, P.~ Pokora, {\em Addition–deletion results for plus-one generated curves}, J Algebr. Comb. Vol. 60, 723 -- 734 (2024). https://doi.org/10.1007/s10801-024-01350-x

 \bibitem{MV} A. ~M\u{a}cinic, J. ~Vall\`es,    {\em A geometric perspective on plus-one generated arrangements of lines},   Int. J. Math.Vol. 35, No. 10, 2450034  (2024)
  
  \bibitem{BM} M. A. Marco-Buzunariz and J. Martıin-Morales, 
  {\em Graded Betti numbers of the logarithmic derivation module},
   Comm. Algebra 38.11 (2010), 4348--4361.
  
 \bibitem{dPW} A.A. du Plessis, C.T.C. Wall,   {\em Application of the theory of the discriminant to highly singular plane curves}, Math. Proc. Camb. Phil. Soc., 126 (1999), 259-266.

% \bibitem{Terao} H. ~Terao,  {\em Arrangements of hyperplanes and their freeness   I, II},  J. Fac. Sci. Univ. Tokyo 27 (1980), 293--320.
  
\end{thebibliography}
\end{document}